\documentclass[12pt]{amsart}
\usepackage{a4wide}
\usepackage{dsfont}
\usepackage{listings}
\usepackage{hyperref} 
\usepackage{booktabs,setspace}

\newtheorem{theorem}{Theorem}
\newtheorem{lemma}{Lemma}

\newtheorem{proposition}{Proposition}

\newcommand{\eps}{{\varepsilon}}
\newcommand{\R}{{\mathbb R}}
\newcommand{\N}{{\mathbb N}}
\newcommand{\1}[1]{\mathds{1}_{\{{#1}\}}}
 \newcommand{\PP}[1]{\mathbb{P}\left[#1\right]}
 \newcommand{\PPc}[2]{\mathbb{P}\left[#1\,\middle|\,#2\right]}
\newcommand{\EE}[1]{\mathbb{E}\left[#1\right]}
\newcommand{\EEc}[2]{\mathbb{E}\left[#1\,\middle|\,#2\right]}

\newcommand*\diff{\mathop{}\!\mathrm{d}}

\title{Sharp Tail Bounds Beyond Twice the Mean}

\author[]{Philipp Strack$^{\dagger}$}
\author[]{Jannik M. Westermann$^{\ddagger}$}
\thanks{$^{\dagger}$Yale University. Email: \texttt{philipp.strack@gmail.com}}
\thanks{$^{\ddagger}$Yale University. Email: \texttt{jmwesterm@gmail.com}}

\date{\today}

\begin{document}

\begin{abstract}
Consider $n$ independent, non-negative, mean at most one random variables, $X_1,X_2,\ldots$. We show the following bound on the probability of their sum exceeding a threshold $t$:
    \[
    \PP{\sum_{i=1}^n X_i\ge t} \leq 1-\left(1-\frac{1}{t}\right)^n \text{ for all } t\ge 2n+1 \,.
    \]
To prove this, we consider a relaxed optimization problem over a set of sequences of ordered, but non-independent random variables. This allows us to reformulate it recursively as dynamic programming problem. 
The bound becomes an equality for the binary i.i.d.~random variables satisfying $\PP{X_i=0}= 1-\frac{1}{t}$ and $\PP{X_i=t}=\frac{1}{t}$, which remains the maximizer in the relaxed problem.
\end{abstract}

\maketitle

\section{Introduction}

Let $X_1,\ldots,X_n$ be independent non-negative random variables satisfying
$\EE{X_i}\le 1$. We study the extremal problem
\[
    \sup_{X} \PP{X_1+\cdots+X_n\ge t},
\]
where the supremum ranges over all such collections. Markov's inequality gives
the universal bound $n/t$, but this bound does not use independence. A natural
independent construction is instead
\[
 X_i=\begin{cases}
 t,&\text{with probability }1/t,\\
 0,&\text{with probability }1-1/t.
 \end{cases}
\]
For this construction the event that the sum reaches $t$ is the event that at
least one variable takes its upper value, and hence
\[
    \PP{X_1+\cdots+X_n\ge t}
    =1-\left(1-\frac1t\right)^n.
\]
Our main result shows that this lower bound on the extremal probability is
sharp when $t\ge 2n+1$.

This question is a special case of a classical conjecture of Samuels on sums
of independent non-negative random variables with prescribed means
\cite{Samuels_1966_independent_sum_conjecture}. To describe the conjectured
extremizers in the equal-mean case, suppose $t>n$ and fix $k\in\{1,\ldots,n\}$.
Let $n-k$ variables be identically equal to $1$, and let each of the remaining
$k$ variables independently take the value $t-n+k$ with probability
$1/(t-n+k)$ and the value $0$ otherwise. This gives the tail probability
\[
    H_k(t):=1-\left(1-\frac{1}{t-n+k}\right)^k.
\]
The equal-mean specialization of Samuels' conjecture asserts that the maximal
tail probability is $\max_{1\le k\le n}H_k(t)$. The construction above
corresponds to $k=n$.

Samuels proved his conjecture for $n=1,2,3,4$
\cite{Samuels_1966_independent_sum_conjecture,samuels1968more}, and, when $n\ge 5$, in the far-tail range
$t\ge n(n-1)$ \cite{samuels1969markov}. 

For \emph{identically} distributed summands, it can be shown that the $k=n$ construction is optimal for $t\ge\tfrac{5n-2}{3}$, see Frankl and Kupavskii \cite[Corollary 26]{frankl2022erdHos}. 
Our theorem treats arbitrary
independent, \emph{non-identically distributed} summands and lowers the previously known quadratic sufficient
threshold to a linear one, like there is in the i.i.d.~case.

The main idea is to enlarge the class of admissible random variables until the
optimization becomes recursive, while retaining enough of the structure of
independence to obtain a sharp bound. We first reduce to two-point
distributions, shift their lower support points to zero, and order their upper
support points decreasingly. Only then do we remove independence. The resulting
adapted processes have a predictable mean budget and conditionally binary
jumps whose possible sizes decrease over time. This last monotonicity condition
is essential: without it, the relaxed problem recovers the Markov bound. The
relaxed problem admits a dynamic programming formulation, which we control by
combining a binomial concentration bound for small jumps with an explicit
supersolution for large jumps.

\newpage
\section{Main Result and Proof Strategy}

\begin{theorem}\label{theorem_main}
    Let $n\in\N$, and let $X_1,\ldots,X_n$ be independent non-negative random
    variables satisfying $\EE{X_i}\le1$. Then, for every $t\ge2n+1$,
    \[
\PP{\sum_{i=1}^n X_i\ge t} \le 1-\left(1-\frac{1}{t}\right)^n.
\]

Equality is attained by i.i.d.~random variables satisfying
$\PP{X_i=0}=1-1/t$ and $\PP{X_i=t}=1/t$.
\end{theorem}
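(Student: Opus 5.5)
We follow the strategy outlined in the introduction. First we reduce to two-point distributions. Fix all variables except $X_i$. The probability $\PP{\sum_j X_j\ge t}$ is linear in the law of $X_i$. The law of $X_i$ ranges over a convex set cut out by one moment constraint, $\EE{X_i}\le 1$, so its extreme points have at most two atoms. Applying this coordinatewise, we may assume each $X_i$ takes values $a_i<b_i$ with $\PP{X_i=b_i}=p_i$. Replacing $a_i$ by $0$ and $b_i$ by $b_i-a_i$ shifts the sum down by $\sum a_i$ and lowers each mean by $a_i$. Each mean constraint therefore still holds, and the threshold drops to $t-\sum a_i$. We then use the monotone structure: the reduced problem with threshold $t'$ and means $\le 1-a_i$ can be rescaled to match the original form, or handled directly by allowing heterogeneous budgets $m_i\le1$. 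So we may assume $X_i\in\{0,b_i\}$ with $p_ib_i\le 1$, and, relabelling, $b_1\ge b_2\ge\dots\ge b_n$. We may also take $b_i< t$ without loss, since jumps larger than the remaining distance can be truncated to exactly the remaining distance.

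Next we relax independence. We consider adapted processes $S_k=\sum_{i\le k}X_i$ in which, given $\mathcal F_{k-1}$, $X_k\in\{0,B_k\}$ with $B_k$ predictable and $B_k\le B_{k-1}$. The success probability $p_k$ satisfies $p_kB_k\le 1$, and w.l.o.g.\ $p_k=1/B_k$. Define the value function $V_m(s,b)$ as the maximal probability of reaching $t$ with $m$ steps left, current sum $s$ and jump cap $b$. Writing $r=t-s$ for the remaining distance, it satisfies
\[
V_m(r,b)=\max_{\beta\le b}\Big[\tfrac1\beta V_{m-1}(r-\beta,\beta)+\big(1-\tfrac1\beta\big)V_{m-1}(r,\beta)\Big],
\]
with $V_m(r,\cdot)=1$ for $r\le 0$. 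The target is $V_n(t,\infty)\le 1-(1-1/t)^n$ whenever $t\ge 2n+1$. The natural candidate is $W_m(r)=1-(1-1/r)^m$, valid for $r\ge 2m+1$, the regime from the theorem. In the range $r<2m+1$ we need a separate bound.

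For the inductive step we use two regimes. \emph{Large jumps} ($\beta$ comparable to $r$): we construct an explicit supersolution $U_m(r,\beta)\ge V_m$ that equals $W_m(r)$ in the region $r\ge 2m+1$. We then verify the one-step inequality $\frac1\beta U_{m-1}(r-\beta,\beta)+(1-\frac1\beta)U_{m-1}(r,\beta)\le W_m(r)$ by calculus in $\beta$. The maximum should occur at $\beta=r$, where equality holds, since $W_m(r)=\frac1r+(1-\frac1r)W_{m-1}(r)$. \emph{Small jumps} ($\beta\le r/2$ or so): here the cap is inherited by all later steps, so the future sum is dominated by a sum of $\{0,\beta\}$ jumps with success probability $1/\beta$. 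This is $\beta$ times a binomial $\mathrm{Bin}(m,1/\beta)$, with mean at most $m$ against a target $r\ge 2m+1$. A Chernoff-type binomial tail bound then shows the probability is at most $W_m(r)$, which is of order $m/r$.

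The main obstacle is the transition region, where $\beta$ is a constant fraction of $r$ and $r-\beta$ falls below $2(m-1)+1$. There the induction hypothesis no longer applies to the continuation value, so the supersolution must be defined beyond the range of the theorem. The first thing to try is $U_m(r,\beta)=\min\{1,\ \mathbb P[\beta\,\mathrm{Bin}(m,1/\beta)\ge r]\text{-type bound},\ W_m(r)\}$, checking the Bellman inequality case by case. The constant $2n+1$ should emerge from this check, since $r-\beta\ge r/2$ after one large jump.
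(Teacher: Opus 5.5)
There is a genuine gap already in the reduction. Shifting the lower atoms to $0$ lowers the threshold to $t-\sum_i a_i=t-n+w$ with $w=\sum_i(1-a_i)$, and a lower threshold can only raise the tail probability. When the $a_i$ differ, no common rescaling restores a threshold $\ge t$ with all means $\le1$. ``Allowing budgets $m_i\le1$'' at threshold $t$ simply drops the threshold shift. Your DP then fixes the threshold at $t$, has no budget variable, and sets $p_k=1/B_k$, so it only treats $a_i=0$. That is part of what has to be proved, not a normalization: for smaller $t$, Samuels' extremizers contain variables identically equal to $1$. The missing idea is to keep a real-valued total predictable mean budget $w$ in the state and prove $V_n(x,\infty,w)\le G_w(x)=1-(1-1/x)^w$ for all $x\ge2w+1$. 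You then conclude using the fact that $w\mapsto G_w(t-n+w)$ is increasing on $[0,n]$.

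The inductive step also fails as designed. Suppose the supersolution equals $W_m(r)$ for every cap $\beta$ once $r\ge2m+1$. Then your own one-step inequality fails. For $m\ge2$ and $\beta\in[r-1,r)$ the success branch has value $1$, since a jump of size $1$ with probability $1$ finishes. So the left-hand side is $W_{m-1}(r)+(1-W_{m-1}(r))/\beta>W_m(r)$, and the maximum is not at $\beta=r$. The bound must depend on the cap: it must be strictly smaller than $W$ for $\beta<r$ and increase to $W$ as $\beta\uparrow r$. This is the role of the paper's $P(x,u,w)$.

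Your ``transition region'' disappears once you bound the success branch by Markov's inequality with the remaining budget, $\min\{1,(w-m)/(x-z)\}$, rather than by an induction hypothesis. Induction is then needed only on the failure branch, where the threshold is unchanged and $x\ge2(w-m)+1$ persists.

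Your $\min$-candidate is not an upper bound for large $\beta$ either. Jumping $r-1$ and then $1$ reaches $r$ with probability of order $m/r$, whereas $\PP{\beta\,\mathrm{Bin}(m,1/\beta)\ge r}$ is of order $m^2/r^2$.

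For small jumps, the comparison with Bernoulli sums holds only in convex order, and a Chernoff bound is too weak near $r=2m+1$. At $r=2m+1$ with cap $r/2$, it tends to $e/4\approx0.68$ for large $m$, while the target tends to $1-e^{-1/2}\approx0.39$. The paper instead tests against $(y-1)_+$ (Bentkus' method), which is nearly sharp there. It also compares the result with $P(x,u_0,w)$ at $u_0=(x+1)/2$ rather than with $G$. The condition $x\ge2w+1$ enters through this fit and through the monotonicity of $P$ in $u$. It does not come from ``$r-\beta\ge r/2$'', which is reversed for large jumps.
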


\subsection*{A reduction to independent random variables with binary support}

Adding
the deterministic quantity $1-\EE{X_i}$ to each summand shows that it suffices
to consider variables with mean exactly one.\begin{NoHyper}\footnote{Indeed,
setting $\widetilde X_i:=X_i+1-\EE{X_i}$ preserves independence and
non-negativity, gives $\EE{\widetilde X_i}=1$, and yields
$\widetilde X_i\ge X_i$ pointwise. Hence
$\PP{\sum_{i=1}^nX_i\ge t}\le
\PP{\sum_{i=1}^n\widetilde X_i\ge t}$.}\end{NoHyper} Standard extreme-point arguments
then reduce the optimization, without changing its supremum, to two-point
distributions; see, for example,
\cite{hoeffding_1955_extrema_expected_value,winkler}. Write the support of
$X_i$ as $\{a_i,b_i\}$ and set
\[
    Z_i:=X_i-a_i,\qquad m_i:=\EE{Z_i}=1-a_i,
    \qquad w:=\sum_{i=1}^n m_i.
\]
The variables $Z_i$ have lower support point zero, and
\[
 \left\{\sum_{i=1}^nX_i\ge t\right\}
 =\left\{\sum_{i=1}^nZ_i\ge t-n+w\right\}.
\]
Thus shifting the support creates a trade-off: a smaller threshold is purchased
at the cost of a smaller total mean budget $w$. 
We have thus transformed the problem into an equivalent one in which it is without loss to consider only binary random variables $Z_i$ with support $\{0,u_i\}$.
Since the $Z_i$ are independent,
we may reorder them so that their upper support points $u_i$ are decreasing.

\subsection*{Relaxing independence}
We next relax the problem by considering non-independent random variables.
Let $I$ denote the class of sequences of independent random variables and for $u\in(0,\infty]$ and $w\ge0$, let $M(u,w)$ be the
class of non-negative random processes $Z=(Z_1,Z_2,\ldots)$ for which there are non-negative random variables $(U_i)$ which are predictable in the natural filtration of $Z$ such that
\begin{enumerate}
    \item the conditional law of $Z_i$ given $Z^{i-1}:=(Z_1,\ldots,Z_{i-1})$ is supported on
    $\{0,U_i\}$;
    \item $U_1\le u$ and $U_{i+1}\le U_i$ almost surely;
    \item the predictable mean satisfies $\EEc{Z_i}{Z^{i-1}} \leq 1$ and
    $\sum_{i=1}^{\infty}\EEc{Z_i}{Z^{i-1}}\le w$ almost surely.
\end{enumerate}
The variable $u$ bounds the next possible jump, while $w$ is the remaining budget for future means.
Let $D_{\le1}$ be the set of independent, non-negative sequences of random variables with mean at most $1$. 

\begin{proposition}[Relaxation]\label{Prop:relaxation}
For $n\in\N_0$ and $t\in\R$,
\[
\begin{split}
\sup_{(X_i)\in D_{\le 1}}\PP{\sum_{i=1}^n X_i\ge t} &= \sup_{w\in [0,n]}\,\,\,\sup_{(Z_i)\in I\cap M(\infty,w)}\PP{\sum_{i=1}^n Z_i \ge t-n+w}\\
&\le \sup_{w\in [0,n]}\,\,\,\sup_{(Z_i)\in M(\infty,w)}\PP{\sum_{i=1}^n Z_i \ge t-n+w}. 
\end{split}
\]
\end{proposition}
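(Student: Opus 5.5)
The plan is to prove the equality as two inequalities and then note that the final inequality is immediate from the inclusion $I\cap M(\infty,w)\subseteq M(\infty,w)$.

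\emph{Step 1 ($\ge$).} Fix $w\in[0,n]$ and an independent $(Z_i)\in M(\infty,w)$. The first $n$ coordinates are independent and non-negative. Each has conditional law on $\{0,U_i\}$ with $U_i$ predictable, so by independence each $Z_i$ has a two-point law $\{0,u_i\}$. Write $m_i:=\EE{Z_i}\le1$. Since the predictable means equal the deterministic $m_i$, we have $\sum_{i\le n} m_i\le w$.

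The naive construction $X_i:=Z_i+a_i$ with a constant $a_i$ chosen to exhaust the budget fails: it would require $\sum_i a_i=n-w$ while keeping $\EE{X_i}=m_i+a_i\le1$, i.e.\ $a_i\le 1-m_i$, and $\sum_i(1-m_i)\ge n-w$ need not be attained with equality. Instead I use monotonicity. Set $a_i:=1-m_i\ge0$, so that $\EE{X_i}=1$ and $X_i$ is independent and non-negative. Then
\[
\sum_{i=1}^n X_i=\sum_{i=1}^n Z_i+n-\sum_{i=1}^n m_i\ge \sum_{i=1}^n Z_i+n-w .
\]
Hence $\{\sum_i Z_i\ge t-n+w\}\subseteq\{\sum_i X_i\ge t\}$, and the left side is bounded by the original supremum. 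Terms with index $>n$ are irrelevant.

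\emph{Step 2 ($\le$).} This is where the reduction sketched before the proposition is used. By the footnote argument we may take $\EE{X_i}=1$. Then I invoke the cited extreme-point results (Hoeffding, Winkler): for fixed laws of the other coordinates, $\PP{\sum X_i\ge t}$ is affine in the law of $X_i$. Over the set of laws on $[0,\infty)$ with mean $1$, whose extreme points are laws with at most two atoms, the supremum is therefore approached by two-point laws. Replacing one coordinate at a time, the supremum over $D_{\le1}$ equals the supremum over independent two-point laws $\{a_i,b_i\}$, where $a_i\le1\le b_i$ and degenerate laws are allowed.

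Set $Z_i=X_i-a_i$, supported on $\{0,u_i\}$ with $u_i=b_i-a_i$ (with the convention $u_i=0$ if degenerate), and $m_i=1-a_i\in[0,1]$. Put $w=\sum_{i\le n} m_i\in[0,n]$. Reorder the indices so that the $u_i$ decrease, which preserves independence and the sum. Set $Z_i\equiv0$ for $i>n$, and take $U_i:=u_i$ for $i\le n$ and $U_i:=u_n$ afterwards. Then $(Z_i)\in I\cap M(\infty,w)$, the predictable means are $m_i\le1$ and sum to $w$, and the event identity displayed before the proposition finishes this direction.

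\emph{Main obstacle.} The delicate step is the extreme-point reduction: the set of mean-one laws is not compact and the objective is only upper semicontinuous. Rather than proving attainment, I would argue at the level of suprema. For fixed other coordinates, conditioning gives $\PP{\sum X_j\ge t}=\EE{g(X_i)}$ with $g$ measurable and bounded. For any law $\mu$ with mean $1$, $\int g\,\diff\mu$ is an average of $\int g\,\diff\nu$ over two-point mean-one laws $\nu$. This follows from the standard decomposition of a mean-$1$ measure into a mixture of two-point measures with mean $1$, pairing mass below $1$ with mass above $1$. Hence some two-point $\nu$ does at least as well up to $\eps$. Iterating over $i=1,\ldots,n$ gives the claim.
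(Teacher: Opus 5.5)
Your proposal is correct and takes the paper's route: reduce to mean one, pass to two-point laws, shift the lower atoms to zero, reorder by upper atoms, then drop independence. Beyond what the paper writes, you also prove the reverse inequality and give a self-contained mixture-of-two-point-laws argument for the extreme-point step that the paper only cites. One small aside: your remark that the naive shift ``fails'' is not right, since $0\le n-w\le\sum_{i\le n}(1-m_i)$ lets you choose $a_i\in[0,1-m_i]$ with $\sum_i a_i=n-w$; this is harmless, because the monotone version you actually use is valid.
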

\begin{proof}The first equality holds following the argument given above, while dropping independence
in the second line gives an upper bound.
\end{proof}
Proposition~\ref{Prop:relaxation} relaxes exactly one constraint: the conditional distribution of each \(Z_i\) may now depend on the preceding outcomes. It retains the two properties inherited from the independent problem: the pathwise nesting of the support hulls \([0,U_i]\) and the total predictable mean budget. The results below show that this relaxation is tight in the range \(t\ge2n+1\).

The decreasing-jump condition imposed on the processes in $M(u,w)$ is what makes the removal of
independence useful. To see this, suppose it were omitted and $t>n$. As long as
all previous outcomes are zero, an adapted process could let $Z_i=t-n+i$ with
probability $1/(t-n+i)$ and let $Z_i=0$ otherwise; after the first non-zero
outcome it could set all remaining variables equal to one. Every conditional
mean is one, and the sum reaches $t$ if and only if some non-zero outcome occurs.
Consequently its tail probability is
\[
  1-\prod_{i=1}^n\left(1-\frac{1}{t-n+i}\right)=\frac nt,
\]
the Markov bound. Along the all-zero history, however, it is possible jump sizes
increase. The monotonicity inherited from reordering the independent variables
rules out precisely this behavior.

\subsection*{A dynamic programming formulation}
To compute the upper bound obtained in Proposition \ref{Prop:relaxation}, define the
value function
\[
    V_n(x,u,w) := \sup_{(Z_i)\in M(u,w)}\PP{\sum_{i=1}^n Z_i \ge x}
\]
for $n\in\N_0$. Conditioning on the first binary outcome gives the following
Bellman recursion.

\begin{lemma}[Dynamic programming principle]\label{Lemma:dynamic_programming}
    The value function $V$ is the unique solution to the recursive equation 
    \[
V_n(x,u,w)= \sup_{\substack{0<z\le u\\ 0\le m\le \min\{1,w,z\}}} \frac{m}{z}V_{n-1}(x-z,z,w-m) + \left(1-\frac{m}{z}\right) V_{n-1}(x,z,w-m)
\]
with initial value $V_0(x,u,w) = \1{x\le 0}$.
\end{lemma}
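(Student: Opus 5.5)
I will prove the dynamic programming principle by establishing two inequalities between $V_n$ and the right-hand side, which I denote $(TV_{n-1})(x,u,w)$. Uniqueness is immediate: the recursion determines $V_n$ from $V_{n-1}$, and $V_0$ is prescribed, so induction on $n$ gives uniqueness. For $n=0$ the empty sum equals $0$, so $\PP{0\ge x}=\1{x\le 0}$. Throughout I will use that a process in $M(u,w)$ restricted to its first $n$ coordinates only matters for $\PP{\sum_{i\le n}Z_i\ge x}$. I will also use that the pair $(U_1,\EE{Z_1})$ is deterministic, since $U_1$ is predictable with respect to the trivial $\sigma$-field.

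\emph{Upper bound $V_n\le TV_{n-1}$.} Fix $Z\in M(u,w)$, and set $z:=U_1\le u$ and $m:=\EE{Z_1}$. Then $m\le 1$ and $m\le w$ by condition (3), and $m\le z$ because $Z_1\in\{0,z\}$. If $z=0$, then $Z_1=0$, and I may replace $z$ by any small positive value with $m=0$; the resulting term is dominated after checking monotonicity of $V_{n-1}$ in $u$, which follows from $M(u',w)\subseteq M(u,w)$ for $u'\le u$. Otherwise $\PP{Z_1=z}=m/z$. On each event $\{Z_1=y\}$ with $y\in\{0,z\}$ that has positive probability, consider the shifted process $Z'_i:=Z_{i+1}$ under $\PPc{\cdot}{Z_1=y}$. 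I will check that it lies in $M(z,w-m)$: the predictable bounds $U_{i+1}$ become predictable for the shifted filtration, they are nonincreasing, $U_2\le U_1=z$, and the remaining conditional means sum to at most $w-m$ almost surely on that event. Hence $\PPc{\sum_{i=2}^{n}Z_i\ge x-y}{Z_1=y}\le V_{n-1}(x-y,z,w-m)$. Averaging over $y$ gives $\PP{\sum_{i\le n} Z_i\ge x}\le (TV_{n-1})(x,u,w)$, and taking the supremum over $Z$ gives the bound.

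\emph{Lower bound $V_n\ge TV_{n-1}$.} Fix admissible $(z,m)$ and $\eps>0$. Choose processes $Z^{(1)}\in M(z,w-m)$ and $Z^{(0)}\in M(z,w-m)$ that are $\eps$-optimal for $V_{n-1}(x-z,z,w-m)$ and $V_{n-1}(x,z,w-m)$ respectively. Define $Z_1\in\{0,z\}$ with $\PP{Z_1=z}=m/z$, and then, independently of everything else, let $(Z_2,Z_3,\ldots)$ be a copy of $Z^{(1)}$ on $\{Z_1=z\}$ and a copy of $Z^{(0)}$ on $\{Z_1=0\}$. I will take $U_1:=z$ and $U_{i+1}:=U^{(Z_1/z)}_i$. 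These are predictable in the natural filtration of $Z$, because $Z_1$ is observed and the branch's own history determines its bounds (if $z$ could be confused with a value of the branch, I would work in the canonical path space where the history $(Z_1,\ldots)$ already records $Z_1$). They satisfy $U_2\le z=U_1$ and are monotone afterwards. The conditional means are $\le 1$ and sum to at most $m+(w-m)=w$. Hence $Z\in M(u,w)$, and $\PP{\sum Z_i\ge x}\ge \tfrac mz V_{n-1}(x-z,z,w-m)+(1-\tfrac mz)V_{n-1}(x,z,w-m)-\eps$. Letting $\eps\to 0$ and taking the supremum over $(z,m)$ finishes the argument.

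\emph{Main obstacle.} The main difficulty is the measure-theoretic bookkeeping in both directions. In the upper bound I must show that the conditioned, shifted process genuinely lies in $M(z,w-m)$ when the filtration is the natural one; conditioning on an atom $\{Z_1=y\}$ of positive probability handles this cleanly, and atoms of probability zero do not contribute. In the lower bound I must check that gluing two processes preserves predictability with respect to the natural filtration of the glued process. The edge cases $m=0$ or $z=0$ and the $u=\infty$ convention should be handled by convention: since $m/z=0$, only the second branch is used.
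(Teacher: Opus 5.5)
Your proposal is correct and follows the paper's proof. The upper bound conditions on the two values of $Z_1$ and bounds each continuation by $V_{n-1}(\cdot,z,w-m)$, and the lower bound glues $\eps$-optimal continuations onto a binary first step. Your extra bookkeeping fills in details the paper leaves implicit: taking $z:=U_1$ as deterministic, handling the degenerate case $U_1=0$, and checking predictability of the glued bounds. The canonical-path-space caveat is unnecessary, since $Z_1$ is itself a coordinate of the natural filtration.
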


This recursive formulation is simpler than the original optimization problem.
Once the continuation value $V_{n-1}$ is known, the step from $V_{n-1}$ to
$V_n$ requires only an optimization over the first binary random variable: its
support is $\{0,z\}$ with $0<z\le u$, and its mean $m$ is at most
$\min\{1,w\}$ (and necessarily at most $z$). The two continuation values are
then already encoded by $V_{n-1}$.

The continuous-budget analogue of the candidate tail probability is
\[
G_w(x):=1-\left(1-\frac{1}{x}\right)^w.
\]
The following proposition establishes a uniform estimate on the value function $V$.

\begin{proposition}\label{Prop:target_bound}
Let $w\ge 0$ and $x\ge 2w+1$ and $x>1$. Then for $n\in \N_0$ we have 
    \[
    V_n(x,\infty,w) = V_n(x,x,w)\le G_w(x).
    \]
\end{proposition}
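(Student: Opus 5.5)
The plan is induction on $n$ using the Bellman recursion of Lemma~\ref{Lemma:dynamic_programming}. First I dispose of the equality $V_n(x,\infty,w)=V_n(x,x,w)$. Any jump of size $z>x$ can be replaced by a jump of size exactly $x$ and the same probability $m/z$. This only lowers the mean used, so the budget is preserved, and reaching $x$ still happens on that event. After such a jump the remaining upper bounds are at most $x$, so monotonicity is preserved. Hence we may assume $u\le x$ throughout, and I will prove the stronger claim that $V_n(x,u,w)\le G_w(x)$ for all $u$ on the region $\mathcal R=\{x>1,\ x\ge 2w+1\}$. The base case $n=0$ is trivial, since $V_0=\1{x\le0}=0\le G_w(x)$ there.

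For the inductive step, fix $(x,w)\in\mathcal R$ and a first move $(z,m)$ with $z\le x$ and $m\le\min\{1,w,z\}$. The continuation at $(x,z,w-m)$ stays in $\mathcal R$ because $w-m\le w$. So the "no jump" branch is bounded by $G_{w-m}(x)$, using the fact that $G_w(x)$ is increasing in $w$. The "jump" branch lands at $(x-z,z,w-m)$, which may leave $\mathcal R$. The argument splits according to the jump size, as announced in the introduction.

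\emph{Large jumps.} Suppose $x-z<2(w-m)+1$ or $x-z\le1$. Then I bound $V_{n-1}(x-z,z,w-m)$ differently. If $x-z\le 0$ the bound is $1$. Otherwise I use a supersolution $S$ defined on the whole state space, which agrees with $G$ on $\mathcal R$ and is trivially $\ge V$ elsewhere. Something like the Markov-type bound $\min\{1,w/x\}$ should do, or better, a bound exploiting that all later jumps are at most $z$. Then I must verify the one-step inequality
\[
\frac mz\,S(x-z,w-m)+\Bigl(1-\frac mz\Bigr)G_{w-m}(x)\le G_w(x).
\]
Because $z$ is large ($z\gtrsim x-2w$), the factor $m/z$ is small, about $m/x$. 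Moreover $G_w(x)-G_{w-m}(x)=(1-1/x)^{w-m}\bigl(1-(1-1/x)^m\bigr)\approx m/x$ pays for it. In the extreme case $z=x$ this is an identity: $\tfrac{m}{x}+(1-\tfrac mx)G_{w-m}=1-(1-\tfrac1x)^{w-m}(1-\tfrac mx)$, and $(1-\tfrac1x)^m\le 1-\tfrac mx$ for $m\le1$ by Bernoulli. So the inequality holds at $z=x$, and I expect it to persist for $z$ in the large range by monotonicity/convexity in $z$.

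\emph{Small jumps.} When $x-z\ge 2(w-m)+1$, the induction hypothesis gives $G_{w-m}(x-z)$, and I need
\[
\frac mz\,G_{w-m}(x-z)+\Bigl(1-\frac mz\Bigr)G_{w-m}(x)\le G_w(x).
\]
This is a concavity-type statement about $y\mapsto G_{w-m}(x-y)$ relative to the budget derivative. For small $z$ relative to $x$, a direct estimate may fail to be tight. Here I would instead use the binomial concentration idea: all remaining jumps are at most $z$ and the total mean is at most $w\le (x-1)/2$. Reaching $x$ then needs at least $\lceil x/z\rceil$ successes of Bernoulli-type trials with total expected count $w/z$. A Chernoff/binomial tail bound gives a probability far below $G_w(x)\approx w/x$ whenever $z\le x/2$ or so. This handles the whole process directly, without using the recursion.

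The main obstacle is matching the two regimes. I need a threshold $z^*$ (roughly $x-2w$ or $x/2$) such that the binomial bound covers $z\le z^*$ and the explicit supersolution inequality covers $z\ge z^*$, uniformly in $n$. The condition $x\ge 2w+1$ is exactly what should make the regimes overlap. I would first check the inequalities at the boundary $x=2w+1$ numerically or by calculus, then treat intermediate $z$ by showing the relevant difference is monotone in $z$.
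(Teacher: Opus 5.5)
Your truncation step, your base case, and your idea of splitting the first jump size at about $x/2$ into a concentration regime and a recursive regime all match the paper. The gap is in the large-jump step. There you bound the no-jump branch by $G_{w-m}(x)$, and with that bound the one-step inequality is false for every valid $S$. It fails just below $z=x$, so these $z$ cannot be handed to the binomial bound. Take $m=1$ and $z<x$ with $x-z\le\min\{1,w-1\}$. For $n\ge2$ the jump branch has value exactly $1$: make the next jump deterministically equal to $x-z$. Any valid $S$ must therefore equal $1$ there, and your inequality becomes
\[
\frac1z+\Bigl(1-\frac1z\Bigr)G_{w-1}(x)\le\frac1x+\Bigl(1-\frac1x\Bigr)G_{w-1}(x),
\]
that is, $(\frac1z-\frac1x)(1-G_{w-1}(x))\le0$. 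This is false for every $z<x$. For instance, $x=10$, $w=4$, $z=9.5$ gives $0.3477>0.3439=G_4(10)$. The identity you found at $z=x$ relies on Bernoulli's inequality, which has no slack when $m=1$. So it cannot ``persist by monotonicity'' in $z$.

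The missing idea is that the induction hypothesis must remember the jump bound. After a failed attempt at a jump of size $z<x$, no later jump can reach $x$ in one step. Hence $V_{n-1}(x,z,w-m)$ is strictly smaller than $G_{w-m}(x)$, and the induction only closes if you use this. The paper introduces the $u$-dependent supersolution $P(x,u,w)=\int_0^w\lambda p^{w-\ell}\min\{1,\ell/(x-u)\}\diff\ell$, with $p=1-1/u$ and $\lambda=-\log p$. It then proves four facts:
\begin{enumerate}
\item $Q_{n,w}(x)\le P(x,u_0,w)$;
\item $P$ is increasing in $u$ on $[u_0,x)$;
\item the one-step inequality $\frac mz\min\{1,\frac{w-m}{x-z}\}+(1-\frac mz)P(x,z,w-m)\le P(x,z,w)$, which uses Markov on the success branch and $P$ rather than $G$ on the failure branch;
\item $\sup_{u_0\le u<x}P(x,u,w)=G_w(x)$.
\end{enumerate}
Your Bernoulli computation with $G$ is then needed only for the single first move $z=x$.

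A secondary issue concerns the small-jump regime, where a Chernoff bound is not strong enough near $x=2w+1$. There $G_w(x)\to1-e^{-1/2}\approx0.39$, not $\approx w/x$. With normalized mean budget $1$ and level $2$, Chernoff gives only about $e/4\approx0.68$. This is why the paper uses the sharp convex-order comparison with a Bernoulli process, tested against $(y-1)_+$.
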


Jumps larger than the target threshold $x$ can be truncated at $x$ without decreasing the tail
probability, which explains the equality in Proposition~\ref{Prop:target_bound}.

\begin{proof}[Proof of Theorem~\ref{theorem_main}]
Given Proposition \ref{Prop:target_bound} we can establish Theorem~\ref{theorem_main} by the chain
\begin{align*}
    \sup_{(X_i)\in D_{\le1}}
        \PP{\sum_{i=1}^nX_i\ge t}
    &\le \sup_{0\le w\le n}V_n(t-n+w,\infty,w)\\
    &\le \sup_{0\le w\le n}G_w(t-n+w)\\
    &\le G_n(t).
\end{align*}
Here, the first inequality follows from Proposition~\ref{Prop:relaxation} and the definition of the value function.
The second inequality is implied by Proposition~\ref{Prop:target_bound}, as if $t\ge2n+1$ and $w\le n$, then
$t-n+w\ge2w+1,$
so Proposition \ref{Prop:target_bound} applies. The last inequality follows as $w\mapsto G_w(t-n+w)$ is increasing on $[0,n]$, so its maximum is attained
at $w=n$, where it equals $G_n(t)$.\footnote{To see this recall that $G_s(x):=1-(1-1/x)^s$. Set $a:=t-n\ge1$ and
$F(s):=G_s(a+s)$. Then
\[
\frac{\diff}{\diff s}\log(1-F(s))
=\log\left(1-\frac1{a+s}\right)+\frac{s}{(a+s)(a+s-1)}
\le \log\left(1-\frac1{a+s}\right)+\frac1{a+s}\le0,
\]
because $s\le a+s-1$ and $\log(1-r)\le-r$. Hence $F$ is increasing, and
$F(w)\le F(n)=G_n(t)$. The endpoints follow by continuity.}
\end{proof}

Thus, to establish Theorem~\ref{theorem_main} it remains to establish Proposition~\ref{Prop:target_bound}. The
Bellman recursion requires bounds on $V_n(x,u,w)$ for all $u\le x$, not only at
$u=x$. We divide the possible jump bounds at
\[
    u_0:=\frac{x+1}{2}.
\]

\subsection{Small-jump regime}

For $n\ge1$ and $u\le u_0$, monotonicity in the jump bound gives
\[
V_n(x,u,w)\le V_n(x,u_0,w)\le Q_{n,w}(x),
\]
where the last inequality is established in Proposition \ref{Prop:concentration_ineq_binomial} with
\[
Q_{n,w}(x):=
\frac{x+1}{x-1}\left(\frac{2w}{x+1}-1+
\left(1-\frac{2w}{n(x+1)}\right)^n\right).
\]
The underlying concentration argument compares a normalized sum with the sum over a Bernoulli process in convex order and tests it against $(y-1)_+$. This
function counts the number of successes beyond the first and is therefore
positive only when at least two successes occur. The method goes back to
Bentkus \cite{Bentkus}; related binomial comparisons for martingales were
developed by Pinelis \cite{pinelis2006binomial}. Proposition
\ref{Prop:concentration_ineq_binomial} adapts it to a pathwise predictable mean
budget.

\subsection{Large-jump regime}

For $u_0\le u<x$, the relevant terms arise from repeated attempts to make a jump
of size $u$. After the first success, only the residual threshold $x-u$ remains,
and its probability can be bounded by Markov's inequality using the remaining
mean budget. For integer budget, the time of the first success has geometric
weights. Interpolating these weights continuously in the mean budget motivates
the function
\[
P(x,u,w):= \int_0^w \lambda p^{w-\ell}\min\left\{1,\frac{\ell}{x-u}\right\}\diff \ell,
\]
where $p=1-1/u$ and $\lambda=-\log p$. Equivalently,
$\lambda p^{w-\ell}\diff\ell$ is the continuous interpolation of the
first-success probability at the point when $\ell$ units of budget remain, and
$\min\{1,\ell/(x-u)\}$ is the Markov bound for reaching the residual threshold.

Four properties make this heuristic bound into a supersolution of the Bellman
recursion. First, the small-jump estimate fits below it at the interface:
\[
    Q_{n,w}(x)\le P(x,u_0,w).
\]
Second, $P(x,u,w)$ is increasing in $u$ on $[u_0,x)$. Third, writing
$r=w-m$, the key induction inequality is
\[
 \frac{m}{z}\min\left\{1,\frac{r}{x-z}\right\}
 +\left(1-\frac{m}{z}\right)P(x,z,r)
 \le P(x,z,w).
\]
It controls the success branch of the Bellman recursion by Markov's inequality
and the failure branch by induction. Finally,
\[
    \sup_{u_0\le u<x}P(x,u,w)=G_w(x).
\]
The endpoint $u=x$ is handled by the corresponding one-step inequality for
$G$. These facts yield Proposition \ref{Prop:target_bound}, uniformly in the
number of remaining periods. The proof below follows this order: relaxation and
dynamic programming, the small-jump concentration inequality, comparison of
$Q$, $P$, and $G$, and finally the Bellman induction.

\section{Proofs of the Remaining Results}

Proposition \ref{Prop:relaxation} has been proven in the text. 

\subsection{Dynamic programming}
\begin{proof}[Proof of Lemma \ref{Lemma:dynamic_programming}]
    The uniqueness follows inductively immediately. The initial condition holds since the empty sum is $0$. Assume the result holds for $n-1$. Now let $(Z_i)\in M(u,w)$. Denote by $z>0$ the upper value of the support of binary variable $Z_1$ and by $m=\EE{Z_1}$ its mean, such that 
\[
\PP{Z_1=z}=\frac{m}{z}, \quad \PP{Z_1=0}=1-\frac{m}{z}
\]
and $0<z\le u$ and $0\le m\le \min\{1,w,z\}$. Since the first mean is $m$, conditionally on either value of $Z_1$, the future process $(Z_i)_{i\ge 2}$ has predictable mean budget at most $w-m$. Since the upper values in the predictable supports are decreasing, all upper support points can be at most $z$. Thus for $y\in \{0,z\}$
\[
\PPc{\sum_{i=2}^n Z_i\ge x-y}{Z_1=y}\le V_{n-1}(x-y,z,w-m).
\]
Therefore conditioning on the outcomes of $Z_1$ we get 
\[
\begin{split}
    \PP{\sum_{i=1}^n Z_i \ge x}&= \left(1-\frac{m}{z}\right) \PPc{\sum_{i=2}^n Z_i \ge x}{Z_1=0}+ \frac{m}{z}\PPc{\sum_{i=2}^n Z_i\ge x-z}{Z_1=z}\\
    &\le\left(1-\frac{m}{z}\right) V_{n-1}(x,z,w-m)+ \frac{m}{z}V_{n-1}(x-z,z,w-m).\\
\end{split}
\]
Since $Z_i$ was chosen arbitrary, this proves the upper bound. 

Conversely, fix $z$ and $m$ and choose $\eps$-suboptimal processes for the two values $V_{n-1}(x,z,w-m)$ and $V_{n-1}(x-z,z,w-m)$. Let $Z_1$ be $z$ with probability $\frac{m}{z}$ and 0 otherwise. Let the two processes be the continuations conditionally on the corresponding values of $Z_1$. The resulting process is in $M(u,w)$ and achieves the right hand side up to $\eps.$ Taking $\eps \to 0$ proves the reverse inequality.
\end{proof}

\subsection{Concentration inequality}

\begin{proposition}\label{Prop:concentration_ineq_binomial}
Let $(Y_i)$ be a stochastic process taking values in $[0,1]$.
Assume that its predictable mean budget is at most $a\in[0,1]$, that is,
$\sum_{i=1}^\infty\EEc{Y_i}{Y^{i-1}}\le a$. Then, for every $n\in\N$, $y>1$
\[
\PP{\sum_{i=1}^n Y_i\ge y}
\le \frac{1}{y-1}\left(a-1+
\left(1-\frac{a}{n}\right)^n\right).
\]
In particular, if $u\le u_0:=\tfrac{x+1}{2}$, $w\ge 0$ and $x\ge2w-1$, $x>1$ then
\[
V_n(x,u,w)\le
\frac{x+1}{x-1}\left(\frac{2w}{x+1}-1+
\left(1-\frac{2w}{n(x+1)}\right)^n\right)
=Q_{n,w}(x).
\]
\end{proposition}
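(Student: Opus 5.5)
Because $1\le\frac{y-1}{y-1}\bigl((s-1)_+\bigr)^{0}\cdots$ is awkward, the cleaner route is the convex test function $f(s):=(s-1)_+/(y-1)$, which satisfies $\1{s\ge y}\le f(s)$ for $s\ge0$ and $y>1$. Markov-type domination gives $\PP{\sum_{i=1}^n Y_i\ge y}\le \frac{1}{y-1}\EE{(S_n-1)_+}$ with $S_n=\sum_{i=1}^n Y_i$. The whole argument is then a bound on $\EE{(S_n-1)_+}$ for processes with values in $[0,1]$ and total predictable mean at most $a$. The claimed right-hand side equals $\frac{1}{y-1}\EE{(B-1)_+}$ for $B\sim\mathrm{Bin}(n,a/n)$, since $\EE{(B-1)_+}=\EE{B}-1+\PP{B=0}=a-1+(1-a/n)^n$. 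So the goal is $\EE{(S_n-1)_+}\le \EE{(B-1)_+}$.

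To prove this, I first replace each $Y_i$ by a conditionally Bernoulli variable. Given $Y^{i-1}$, a $[0,1]$-valued variable with conditional mean $\mu_i$ is dominated in convex order by a $\{0,1\}$ variable with the same mean. Replacing the $Y_i$ one at a time, from the last to the first, cannot decrease the expectation of a convex function of the sum. The continuation value is convex in the running sum, because a supremum or expectation of $(s+\cdot-1)_+$ remains convex in $s$. The resulting process counts successes with predictable success probabilities $\mu_i$ satisfying $\sum\mu_i\le a\le1$. Then $\EE{(S_n-1)_+}=\EE{S_n}-\PP{S_n\ge1}+\PP{S_n=0}\cdot 0$, that is, $\EE{(S_n-1)_+}=\EE{S_n}-1+\PP{S_n=0}$.

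Next I set up a dynamic program in the remaining budget $b$ and remaining steps $k$. Let
\[
W_k(b)=\sup\Bigl(\EE{S}-1+\PP{S=0}\Bigr)
\]
and note that once a success occurs, $\PP{S=0}$ is irrelevant. Along the all-zero path, choosing $\mu_1,\dots,\mu_k$ with $\sum\mu_i\le b$ gives $\PP{S=0}\le\prod(1-\mu_i)$. The budget spent after a success adds to $\EE{S}$ but is bounded by the total pathwise constraint, so $\EE{S}\le a$. The nontrivial step is then to maximize $\prod_{i=1}^n(1-\mu_i)$ subject to the pathwise constraint. The subtlety is that $\EE{S}$ and $\PP{S=0}$ trade off: spending less on the zero path raises $\PP{S=0}$ but allows more spending after successes. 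The plan is to write
\[
\EE{S}-1+\PP{S=0}=\EE{S\,\1{S\ge1}}-\PP{S\ge1},
\]
and to note that on the zero path the budget remaining after the first success at time $i$ is at most $a-\sum_{j\le i}\mu_j$. This gives the bound
\[
\sum_i \Bigl(\prod_{j<i}(1-\mu_j)\Bigr)\mu_i\Bigl(a-\sum_{j\le i}\mu_j\Bigr).
\]
This is a deterministic optimization over $\mu_i\ge0$ with $\sum\mu_i\le a$. I expect the main obstacle to be showing that this is maximized at $\mu_i=a/n$, where it equals $a-1+(1-a/n)^n$. I would first attempt a symmetrization or smoothing argument, showing that averaging two adjacent $\mu$'s increases the expression using $a\le1$, and otherwise fall back on a direct induction in $n$.

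For the "in particular" part, take $(Z_i)\in M(u,w)$ with $u\le u_0$ and set $Y_i=Z_i/u_0\in[0,1]$ and $a=w/u_0=2w/(x+1)$. The hypothesis $x\ge 2w-1$ gives $a\le1$. The event $\sum Z_i\ge x$ becomes $\sum Y_i\ge y:=2x/(x+1)$, and $y>1$ because $x>1$. Substituting $y-1=\frac{x-1}{x+1}$ yields exactly $Q_{n,w}(x)$. Taking the supremum over $M(u,w)$ finishes the proof.
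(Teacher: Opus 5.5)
You have a genuine gap. Your outer steps are sound: the test function $(s-1)_+/(y-1)$, the identity $\EE{(B-1)_+}=a-1+(1-\frac an)^n$ for $B\sim\mathrm{Bin}(n,a/n)$, and the rescaling $Y_i=Z_i/u_0$, $a=2w/(x+1)$, $y=2x/(x+1)$ in the ``in particular'' part all match the paper. The problem is the deterministic bound you reduce to. It is too weak, and the step you flag as the main obstacle is false. Write $P_k=\prod_{j\le k}(1-\mu_j)$ and $T_k=\sum_{j\le k}\mu_j$. Since $\prod_{j<i}(1-\mu_j)\mu_i=P_{i-1}-P_i$, a one-line induction gives
\[
\sum_{i=1}^{k}(P_{i-1}-P_i)(a-T_i)=a-1+P_k\,(1-a+T_k)\qquad\text{for every }k\ge0 .
\]
Your expression is the case $k=n$. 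AM--GM on the $n+1$ numbers $1-\mu_1,\dots,1-\mu_n,\,1-a+T_n$ (sum $n+1-a$) shows its maximum is $a-1+\bigl(1-\frac{a}{n+1}\bigr)^{n+1}$, attained at $\mu_i=\frac{a}{n+1}$. This strictly exceeds $a-1+(1-\frac an)^n$ for $a>0$. Concretely, for $n=1$ your bound is $\mu_1(a-\mu_1)$ with maximum $a^2/4$, whereas $\EE{(B_1-1)_+}=0$. For $n=2$, $a=1$, $\mu_1=\mu_2=\frac13$ it gives $\frac{8}{27}>\frac14$. So no smoothing argument toward $\mu_i=a/n$ can succeed. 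The slack comes from the term $i=n$: a first success at the last step leaves no periods in which to spend the residual budget $a-T_n$, yet you credit it with $a-T_n$ further expected successes.

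The repair is to use the bound $a-T_i$ only for $i\le n-1$, and $0$ for $i=n$. By the identity with $k=n-1$, the bound becomes $a-1+P_{n-1}(1-a+T_{n-1})$, which no longer depends on $\mu_n$. AM--GM on the $n$ nonnegative numbers $1-\mu_1,\dots,1-\mu_{n-1},\,1-a+T_{n-1}$ (sum $n-a$; nonnegativity uses $a\le1$) then gives exactly $a-1+(1-\frac an)^n$. This is essentially the paper's argument unrolled: the paper inducts on $n$ via $(B-1)_+=B_1B'+(1-B_1)(B'-1)_+$ and applies AM--GM to $1-p_1$ and $n-1$ copies of $1-\frac{a-p_1}{n-1}$. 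Separately, your reduction to a Bernoulli process by replacing variables ``from last to first'' only works if phrased as a dynamic program in (running sum, remaining budget). Later conditional means depend on $Y_i$, so a fixed continuation need not be convex in $Y_i$. The paper avoids this with the coupling $B_i=\1{U_i\le Y_i}$, $U_i$ i.i.d.\ uniform, which gives $\EEc{B}{Y^n}=S$ and predictable success probabilities $\EEc{Y_i}{Y^{i-1}}$ at once.
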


\begin{proof}[Proof of Proposition \ref{Prop:concentration_ineq_binomial}]
Let $U_1,\dots,U_n$ be i.i.d.~uniformly distributed on the unit interval and independent of the sequence $(Y_i)$. Let
\[
B_i:= \1{U_i\le Y_i},\quad B=\sum_{i=1}^n B_i,\quad S= \sum_{i=1}^n Y_i.
\]
Notice that 
\[
\PPc{U_i\le Y_i}{Y_i}= Y_i.
\]
Thus
$\EEc{B}{Y^n}=S$ and hence $\EEc{B}{S}=S$, i.e.~ $B$ is a mean-preserving spread of $S$. In other words, $B$ dominates $S$ in the convex order. For the convex function $y\mapsto (y-1)_+$ we have by the conditional Jensen's inequality
\[
\EE{(S-1)_+}\le \EE{(B-1)_+}.
\]

Let
\[
p_i:=\PPc{B_i=1}{Y^{i-1},B^{i-1}}.
\]
Then independence of the $U_i$ gives
\[
p_i=\EEc{Y_i}{Y^{i-1}}
\]
and predictable success budget
\[
\sum_{i=1}^np_i\le a.
\]
In particular, $p_1=\EE{Y_1}$ is deterministic.

Now set
\[
R_n(a):=a-1+\left(1-\frac{a}{n}\right)^n.
\]
We prove by induction on $n$ that every such Bernoulli process satisfies
\[
\EE{(B-1)_+}\le R_n(a).
\]
For $n=1$, both sides vanish. Suppose the assertion holds for $n-1$
and write
\[
B'=\sum_{i=2}^nB_i.
\]
For almost every value of $(Y_1,B_1)$, under the corresponding
conditional law, the remaining process has predictable success budget
at most $a-p_1$. Hence
\[
\EEc{(B'-1)_+}{Y_1,B_1} \le R_{n-1}(a-p_1) \text{ a.s.}
\]
Moreover, the tower property gives
\[
\EEc{B'}{Y_1,B_1} = \EEc{\sum_{i=2}^np_i}{Y_1,B_1} \le a-p_1 \text{ a.s.}
\]
Writing $(B-1)_+=B_1B' + (1-B_1)(B'-1)_+$, we obtain, using the above inequalities
\[
\begin{split}
\EE{(B-1)_+}&\le p_1(a-p_1) + (1-p_1)R_{n-1}(a-p_1)\\
&=a-1+(1-p_1)\left(1-\frac{a-p_1}{n-1}\right)^{n-1}\\
&\le a-1+\left(1-\frac{a}{n}\right)^n=R_n(a)
\end{split}
\]
where the last inequality follows pointwise from AM-GM applied to the $n$ numbers $1-p_1, 1-\tfrac{a-p_1}{n-1},\dots, 1-\tfrac{a-p_1}{n-1}$. This concludes the induction.

Note that as $\{\sum_{i=1}^n Y_i\ge y\}=\{S\ge y\}$ we have 
\[
(y-1)\1{\sum_{i=1}^n Y_i\ge y}\le (S-1)_+.
\]
Taking the expectation we get the first part of the proposition
\[
(y-1)\,\PP{\sum_{i=1}^n Y_i\ge y}\le \EE{(S-1)_+}\le \EE{(B-1)_+} \le R_n(a).
\]
For any admissible process $(Z_i)$ for $V_n(x,u_0,w)$, the result applies to $Y_i= Z_i/u_0$, $a= w/u_0$ and $y=x/u_0$, since $x\ge 2w-1$ guarantees $w\le u_0$, that is $a\le 1$. 
Thus
\[
V_n(x,u_0,w)\le \frac{u_0}{x-u_0}R_n\left(\frac{w}{u_0}\right) =\frac{x+1}{x-1}\left(\frac{2w}{x+1}-1+\left(1-\frac{2w}{n(x+1)}\right)^n\right)=Q_{n,w}(x).
\]
Since $V_n(x,u,w)\le V_n(x,u_0,w)$ for $u\le u_0$ we are done.
\end{proof}

\subsection{Order of the bounds $G\ge P \ge Q$}
First two auxiliary lemmas.

\begin{lemma}\label{Lemma:lambda_ineqs}
    Let $q\in (0,1)$, $p=1-q$ and $\lambda=-\log p$. Then
    \[
    \lambda\ge q.
    \]
    Moreover if $q=\tfrac{1}{x-s}$ then 
    \[
\lambda'(s)=\frac{q^2}{1-q},\qquad \frac{\lambda'(s)}{\lambda(s)}\le \frac{q}{1-q}=\frac{1}{x-s-1}.
\]

\end{lemma}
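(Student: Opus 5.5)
The lemma is elementary, so the plan is a direct computation in three short steps.

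\emph{Step 1: $\lambda\ge q$.} I would use the standard inequality $\log(1-q)\le -q$ for $q\in(0,1)$. It follows from concavity of $\log$: the tangent line at $1$ gives $\log y\le y-1$, and we evaluate at $y=p=1-q$. Negating gives $\lambda=-\log(1-q)\ge q$. Equivalently, one can use the series $\lambda=\sum_{k\ge1}q^k/k\ge q$.

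\emph{Step 2: the derivative.} Set $q(s)=1/(x-s)$, so that $q'(s)=1/(x-s)^2=q^2$. Since $\lambda(s)=-\log(1-q(s))$, the chain rule gives
\[
\lambda'(s)=\frac{q'(s)}{1-q(s)}=\frac{q^2}{1-q}.
\]
This requires $q\in(0,1)$, i.e.\ $x-s>1$, which is implicit in the hypothesis.

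\emph{Step 3: the ratio bound.} Dividing the formula from Step 2 by $\lambda$ and applying Step 1 ($\lambda\ge q>0$) gives
\[
\frac{\lambda'(s)}{\lambda(s)}=\frac{q^2}{(1-q)\lambda}\le\frac{q^2}{(1-q)q}=\frac{q}{1-q}.
\]
Finally, substituting $q=1/(x-s)$ gives $\frac{q}{1-q}=\frac{1}{x-s-1}$.

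There is no real obstacle. The only point needing care is the sign and domain bookkeeping: we need $q\in(0,1)$ so that $\lambda>0$ and $1-q>0$, which makes the division in Step 3 legitimate and keeps the inequality pointing the right way.
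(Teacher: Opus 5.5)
Your proposal is correct and follows the paper's proof essentially line by line. You get $\lambda\ge q$ from $\log(1-q)\le -q$ (the paper takes logarithms of $1-q\le e^{-q}$), then apply the chain rule with $q'=q^2$, then divide by $\lambda\ge q>0$, and your explicit note that $q\in(0,1)$, i.e.\ $x-s>1$, is needed for the division is bookkeeping the paper leaves implicit.
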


\begin{proof}
    Taking the logarithm of the inequality $1-q\le e^{-q}$ gives $\lambda\ge q$. If $q=\tfrac{1}{x-s}$ then $q'=q^2$ and thus $\lambda'(s)=\tfrac{q'}{1-q}=\tfrac{q^2}{1-q}$. Dividing by $\lambda\ge q$ gives the remaining claim.
\end{proof}

\begin{lemma}\label{Lemma:g_properties}
    Let 
    \[
    g(y):=\frac{y-1+e^{-y}}{y},\quad y>0.
    \]
    Then $0<g(y)<1$ and
\[
    g'(y)\ge 0 ,\qquad \frac{g'(y)}{g(y)}\le \frac1y, \qquad \frac{g'(y)}{1-g(y)}\le \frac12.
    \]
\end{lemma}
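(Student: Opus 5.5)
The plan is to handle each of the four claims by elementary calculus, with the numerator $h(y):=y-1+e^{-y}$ as the central object. Note $h(0)=0$, $h'(y)=1-e^{-y}>0$ for $y>0$, and $h''(y)=e^{-y}>0$. Hence $h(y)>0$ for $y>0$, so $g>0$. Since $e^{-y}<1$, we have $h(y)<y$, so $g<1$.

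\textbf{Step 1 ($g'\ge0$).} Differentiate:
\[
g'(y)=\frac{y h'(y)-h(y)}{y^2}=\frac{1-(1+y)e^{-y}}{y^2}.
\]
The numerator $k(y):=1-(1+y)e^{-y}$ satisfies $k(0)=0$ and $k'(y)=ye^{-y}\ge0$. Therefore $k\ge0$ and $g'\ge0$. Equivalently, $g$ is the slope of the chord of the convex function $h$ from the origin, which is increasing.

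\textbf{Step 2 ($g'/g\le 1/y$).} The ratio is
\[
\frac{g'}{g}=\frac{h'}{h}-\frac1y,
\]
so the claim is equivalent to $y h'(y)\le 2h(y)$, i.e.
\[
y(1-e^{-y})\le 2(y-1+e^{-y}).
\]
Set $\phi(y):=y-2+(y+2)e^{-y}$. Then $\phi(0)=0$ and $\phi'(y)=1-(1+y)e^{-y}=k(y)\ge0$ by Step 1. So $\phi\ge0$, which is exactly the required inequality.

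\textbf{Step 3 ($g'/(1-g)\le1/2$).} First compute $1-g=(1-e^{-y})/y$, which is positive. The claim then reads
\[
\frac{1-(1+y)e^{-y}}{y(1-e^{-y})}\le\frac12.
\]
Multiply through by $2e^{y}$ and set
\[
\psi(y):=y(e^y-1)-2(e^y-1-y)\ge0.
\]
Check derivatives at the origin: $\psi(0)=0$. Next, $\psi'(y)=ye^y-e^y+1$, so $\psi'(0)=0$. Finally, $\psi''(y)=ye^y\ge0$. Integrating twice gives $\psi\ge0$.

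This last step is the only one that needs a second derivative, so it is where I expect the main (mild) obstacle. The alternative is to compare Taylor series: $\psi(y)=\sum_{k\ge3}\bigl(\tfrac1{(k-1)!}-\tfrac2{k!}\bigr)y^k$, whose coefficients are nonnegative since $k\ge2$. Either route settles it.
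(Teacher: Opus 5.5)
Your proof is correct, but it takes a different route from the paper.

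The paper works from the integral representation $g(y)=\int_0^1(1-e^{-ty})\diff t$. From it:
\begin{itemize}
\item $0<g<1$ and $g'(y)=\int_0^1 te^{-ty}\diff t\ge0$ can be read off directly;
\item $yg'\le g$ follows by integrating the pointwise inequality $ae^{-a}\le 1-e^{-a}$ at $a=ty$;
\item $\frac{g'}{1-g}=\frac{\int_0^1 te^{-ty}\diff t}{\int_0^1 e^{-ty}\diff t}$ is the mean of a decreasing density on $[0,1]$, hence at most $1/2$.
\end{itemize}
You instead differentiate explicitly, clear denominators, and check each resulting inequality by showing a function vanishes at $0$ and has nonnegative derivative.

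In Step 2 the two routes are closer than they look. The paper's pointwise inequality is exactly your $k(a)\ge0$. Since $\phi'=k$ and $\phi(0)=0$, your $\phi$ equals $y\int_0^1 k(ty)\diff t$. So you prove the same fact in differentiated rather than integrated form.

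The real difference is Step 3. The paper uses an averaging argument that tacitly relies on a Chebyshev-type fact: a decreasing density on $[0,1]$ has mean at most the uniform mean. You replace this with the self-contained check $\psi(0)=\psi'(0)=0$, $\psi''(y)=ye^y\ge0$.

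What each buys:
\begin{itemize}
\item The paper's representation is more unified. It also shows why the constant $1/2$ appears: it is the mean of the uniform distribution, attained in the limit $y\to0$.
\item Your argument is more mechanical but needs nothing beyond one-variable calculus.
\end{itemize}

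One small wording point: to reach $\psi$ you multiply by the positive quantity $2y(1-e^{-y})e^{y}$, not just $2e^y$. The resulting $\psi$ is correct as written.
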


\begin{proof}
Notice that $g(y)= \int_0^1 (1-e^{-ty})\diff t\in (0,1)$ and $g'(y)= \int_0^1 t e^{-ty}\diff t\ge 0$.
Now it holds $ae^{-a} \le 1-e^{-a}$ for $a\ge 0$, thus $a=ty$ gives
\[
yg'(y)= \int_0^1 tye^{-ty}\diff t \le \int_0^1(1-e^{-ty})\diff t = g(y).
\]
Finally 
\[
\frac{g'(y)}{1-g(y)}= \frac{\int_0^1 te^{-ty}\diff t}{\int_0^1e^{-ty}\diff t}
\]
is an average with decreasing density and therefore positive and less than the average of the uniform distribution on the unit interval which is $\tfrac{1}{2}$.
\end{proof}

\begin{lemma}\label{Lemma:P>Q}
    For $w\ge 0$ and $x>1$, $x\ge 2w+1$, $n\in \N$ we have 
    \[
    Q_{n,w}(x)\le P(x,u_0,w). 
    \]
\end{lemma}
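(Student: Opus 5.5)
The plan is to compute both sides in closed form at $u=u_0$ and then compare them through the function $g$ of Lemma~\ref{Lemma:g_properties}. The case $w=0$ is trivial, since both sides vanish (note $Q_{n,0}(x)=\frac{x+1}{x-1}(0-1+1)=0$). So assume $w>0$. Set
\[
q:=\frac{2}{x+1}=\frac1{u_0},\qquad p=1-q,\qquad \lambda=-\log p .
\]
Because $x>1$, we have $q\in(0,1)$.

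First I simplify $P(x,u_0,w)$. We have $x-u_0=\frac{x-1}{2}$, and the hypothesis $x\ge 2w+1$ gives $\ell\le w\le \frac{x-1}{2}$ for every $\ell\in[0,w]$. Hence the minimum in the integrand is always $\frac{2\ell}{x-1}$, and
\[
P(x,u_0,w)=\frac{2}{x-1}\int_0^w \lambda e^{-\lambda(w-\ell)}\ell\diff\ell .
\]
An elementary integration by parts gives
\[
\int_0^w \lambda e^{-\lambda(w-\ell)}\ell\diff\ell = w-\frac{1-e^{-\lambda w}}{\lambda}=\frac{\lambda w-1+e^{-\lambda w}}{\lambda}.
\]
Therefore
\[
P(x,u_0,w)=\frac{2}{x-1}\,w\,g(\lambda w),
\]
where $g$ is the function of Lemma~\ref{Lemma:g_properties}.

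Next I bound $Q_{n,w}(x)$ from above. Write $a:=\frac{2w}{x+1}=qw$, which is positive. The elementary inequality $(1-a/n)^n\le e^{-a}$ holds here since $0\le a\le 1\le n$ by $x\ge 2w+1$. It gives
\[
Q_{n,w}(x)\le \frac{x+1}{x-1}\left(a-1+e^{-a}\right)=\frac{x+1}{x-1}\,a\,g(a).
\]
Using $\frac{x+1}{x-1}\,a=\frac{2w}{x-1}$, this becomes
\[
Q_{n,w}(x)\le \frac{2}{x-1}\,w\,g(qw).
\]

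Finally, Lemma~\ref{Lemma:lambda_ineqs} gives $\lambda\ge q$, so $\lambda w\ge qw>0$. Lemma~\ref{Lemma:g_properties} gives $g'\ge0$, so $g(qw)\le g(\lambda w)$. Combining the two displays yields $Q_{n,w}(x)\le P(x,u_0,w)$.

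The only step needing care is the first one. We must check that the truncation $\min\{1,\cdot\}$ is inactive at $u=u_0$; this is exactly where $x\ge 2w+1$ enters. Once that is verified, both quantities collapse to the same expression $\frac{2w}{x-1}g(\cdot)$, evaluated at $\lambda w$ and $qw$ respectively, and the comparison is immediate.
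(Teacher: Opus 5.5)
Your proposal is correct and follows the paper's proof essentially step for step. Like the paper, you use $x\ge 2w+1$ to show the Markov term is active throughout, write $P(x,u_0,w)=\frac{w}{s}g(\lambda w)$, bound $Q_{n,w}(x)\le\frac{w}{s}g(a)$ via $(1-a/n)^n\le e^{-a}$, and conclude with $\lambda\ge q$ and the monotonicity of $g$.
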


\begin{proof}
For $w=0$ the result is immediate so assume $w>0$.
    Let us denote $s=x-u_0$, $a=\frac{w}{u_0}$ as well as $q=\frac{1}{u_0}\in (0,1)$, $p=1-q$ and $\lambda=-\log p$. Since $x\ge 2w+1$  we have $s\ge w$, thus in the definition of $P$, the Markov bound is active on all of $[0,w]$. That is
    \[
    \begin{split}
    P(x,u_0,w)&= \int_0^w \lambda p^{w-\ell}\min\{1,\frac{\ell}{s}\}\diff \ell\\
    &=\frac{p^w}{s}\int_0^w \lambda p^{-\ell}\ell \diff \ell
    \end{split}
    \]
    Partial integration shows the following equality for $\alpha,\beta$
    \begin{equation}\label{eq:integral_r}
    \int_\alpha^\beta \lambda p^{-\ell}\ell \diff \ell= p^{-\beta}\left(\beta-\frac{1}{\lambda}\right)-p^{-\alpha}\left(\alpha-\frac{1}{\lambda}\right).
\end{equation}    
Applying it yields since $p=e^{-\lambda}$ and using $g$ from Lemma \ref{Lemma:g_properties}
\[
P(x,u_0,w)= \frac{\lambda w-1+p^w}{\lambda s}=\frac{w}{s}g(\lambda w). 
\]
Since generally $\left(1-\frac{a}{n}\right)^n\le e^{-a}$, we have
\[
R_n(a)= a-1+\left(1-\frac{a}{n}\right)^n \le a-1+e^{-a}=ag(a). 
\]
Thus 
\[
Q_{n,w}(x) = \frac{u_0}{s} R_n(a) \le \frac{u_0}{s}ag(a)= \frac{w}{s}g(a).
\]
By Lemma \ref{Lemma:lambda_ineqs} we have $\lambda
\ge q=\frac{1}{u_0}$ and thus $\lambda w\ge a$. Since by Lemma \ref{Lemma:g_properties} $g$ is increasing, we have 
\[
Q_{n,w}(x)\le\frac{w}{s}g(a) \le \frac{w}{s}g(\lambda w)= P(x,u_0,w).
\]
\end{proof}

\begin{lemma}\label{Lemma:P_increasing}
    Let $w\ge 0$ and $x>1$ and $x\ge 2w+1$. Then $P(x,u,w)$ is increasing in $u$ on $[u_0,x)$.
\end{lemma}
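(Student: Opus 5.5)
The plan is to show that $P(x,u,w)$ is non-increasing as a function of the residual threshold $s:=x-u$, for $s\in(0,x-u_0]$. Throughout, write $q=1/u=1/(x-s)$, $p=1-q$, $\lambda=-\log p$ and $h_s(\ell)=\min\{1,\ell/s\}$. Substituting $v=w-\ell$ gives
\[
P=\int_0^w \lambda e^{-\lambda v}\,h_s(w-v)\diff v .
\]
In this form $s$ enters in two places: through $\lambda=\lambda(s)$, and through the Markov factor $h_s$. I will differentiate in $s$ and compare the two contributions.

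\emph{Step 1: the $\lambda$-dependence.} Differentiating in $\lambda$ gives
\[
\partial_\lambda P=\int_0^w(1-\lambda v)e^{-\lambda v}h_s(w-v)\diff v .
\]
Since $(1-\lambda v)e^{-\lambda v}=\frac{\diff}{\diff v}\bigl(ve^{-\lambda v}\bigr)$, integrate by parts. The boundary terms vanish because the factor $v$ kills the $v=0$ end and $h_s(0)=0$ kills the $v=w$ end. This leaves
\[
\partial_\lambda P=\frac1s\int_0^{m}(w-\ell)K(\ell)\diff\ell ,
\]
where $m=\min\{w,s\}$ and $K(\ell):=e^{-\lambda(w-\ell)}$ is increasing in $\ell$. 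The direct $s$-derivative of $h_s$ is $-\ell/s^2$ on $\ell<s$ and $0$ otherwise. Hence
\[
\frac{\diff P}{\diff s}=\frac{\lambda'(s)}{s}\int_0^m(w-\ell)K\diff\ell-\frac{\lambda}{s^2}\int_0^m \ell K\diff\ell .
\]
Only the kink of $h_s$ at $\ell=s$ needs care; $P$ is still $C^1$ in $s$, since the integrand is Lipschitz in $s$.

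\emph{Step 2: reduce to a weighted-mean inequality.} By Lemma~\ref{Lemma:lambda_ineqs}, $\lambda'(s)/\lambda\le 1/(x-s-1)=1/(u-1)$. It therefore suffices to prove
\[
s\int_0^m(w-\ell)K\diff\ell\le (u-1)\int_0^m\ell K\diff\ell .
\]
The two hypotheses enter as follows. The condition $u\ge u_0=(x+1)/2$ is equivalent to $s\le u-1$. The condition $x\ge 2w+1$ gives $u-1=x-s-1\ge 2w-s$. The basic tool is that under an increasing density on $[0,m]$ the mean of $\ell$ is at least $m/2$. Consequently $\int_0^m(c-\ell)K\le\int_0^m \ell K$ whenever $c\le m$, and $m\int_0^m K\le 2\int_0^m \ell K$.

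\emph{Step 3: two cases.}

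If $w\le s$, then $m=w$, and $\int_0^w(w-\ell)K\le\int_0^w\ell K$. Combining this with $s\le u-1$ gives the claim.

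If $s<w$, then $m=s$. Split $w-\ell=(w-s)+(s-\ell)$, which gives
\[
s\int_0^s(w-\ell)K\le s(w-s)\int_0^s K+s\int_0^s\ell K\le 2(w-s)\int_0^s\ell K+s\int_0^s \ell K .
\]
The right-hand side equals $(2w-s)\int_0^s \ell K$, which is at most $(u-1)\int_0^s\ell K$.

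The main obstacle is this second case, where the Markov bound saturates inside $[0,w]$. There the crude bound $s\le u-1$ alone fails, and one has to use the budget condition $x\ge 2w+1$ through $u-1\ge 2w-s$. With $\diff P/\diff s\le 0$ established in both cases, $P$ is increasing in $u=x-s$ on $[u_0,x)$.
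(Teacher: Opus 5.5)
Your proof is correct. It shares the paper's skeleton: show that $P$ is non-increasing in $s=x-u$, split at $s=w$, control $\lambda'$ via Lemma~\ref{Lemma:lambda_ineqs}, and use $s\le (x-1)/2$ in the regime $s\ge w$ and $x\ge 2w+1$ in the regime $s<w$.

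The execution differs. The paper evaluates the integral in closed form in each regime: $P=\frac{w}{s}g(\lambda w)$ for $s\ge w$, and $1-P=e^{-\lambda(w-s)}(1-g(s\lambda))$ for $s\le w$. It then takes logarithmic derivatives of $P$ and of $1-P$, bounds them with $yg'(y)\le g(y)$ and $g'(y)/(1-g(y))\le\tfrac12$ from Lemma~\ref{Lemma:g_properties}, and glues the two pieces by continuity at $s=w$. You instead differentiate the integral representation directly and integrate by parts. This gives one derivative formula valid for all $s$, and reduces both regimes to the single inequality $s\int_0^m(w-\ell)K\diff\ell\le(u-1)\int_0^m\ell K\diff\ell$. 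You settle that with one elementary fact: an increasing weight on $[0,m]$ has mean at least $m/2$.

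The two routes are closer than they look. For $s\ge w$ one has $\int_0^w(w-\ell)K\diff\ell=w^2g'(\lambda w)$ and $\int_0^w\ell K\diff\ell=\frac{w}{\lambda}g(\lambda w)$. So your inequality $\int_0^w(w-\ell)K\le\int_0^w\ell K$ is exactly $yg'(y)\le g(y)$ at $y=\lambda w$. Likewise, the paper's proof of $g'/(1-g)\le\tfrac12$ is the same midpoint fact applied to a decreasing weight.

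Your version is more self-contained for this lemma and makes it transparent where each hypothesis enters. The small cost is justifying differentiation under the integral across the kink of $h_s$, which your Lipschitz remark handles. The paper's version pays off elsewhere: the closed form $P(x,u_0,w)=\frac{w}{s}g(\lambda w)$ and the monotonicity of $g$ are reused in Lemma~\ref{Lemma:P>Q}.
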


\begin{proof}
Write 
\[
s=x-u, 
\]
thus $0<s\le \frac{x-1}{2}$. We will prove that $P(x,x-s,w)$ is decreasing as a function of $s$. Write
\[
q=\frac{1}{u}=\frac{1}{x-s}, \quad p=1-q, \quad \lambda=-\log p
\]
and note that $q\in (0,1)$ since $u\ge u_0= \tfrac{x+1}{2}$ and $x>1$.

Recall that
\[
P(x,u,w) = \int_0^w \lambda p^{w-\ell} \min\left\{1,\frac{\ell}{s}\right\}\diff \ell.
\]
We show that $P$ is decreasing in $s$ in the cases $s\le w$ and $s\ge w$.

\textbf{Case 1:} $s\le w$. The minimum equals $\tfrac{\ell}{s}$ on $[0,s]$ and equals 1 on $[s,w]$. Hence by direct integration for the second term and \eqref{eq:integral_r} for the first term we get
\[
\begin{split}
P(x,u,w)&=  \frac{p^w}{s}\int_0^s \lambda p^{-\ell}\ell\diff \ell+ p^w\int_s^w \lambda p^{-\ell}\diff \ell\\&= 1-p^{w-s}\frac{1-p^s}{s\lambda}.
\end{split}
\]
Since $p=e^{-\lambda}$, this means
\[
1-P(x,u,w)=e^{-\lambda(w-s)}(1-g(s\lambda)).
\]
To show that $1-P$ is increasing in $s$, we show that its logarithmic derivative is positive. Indeed we have
\begin{equation*}
\begin{alignedat}{2}
\frac{\diff}{\diff s}\log(1-P(x,u,w))
&= -\lambda' (w-s)+\lambda - \frac{g'(s\lambda) }{1-g(s\lambda)}(\lambda's+\lambda)
    && \\
& \ge  \lambda-\lambda' (w-s) -\frac12(\lambda+s\lambda')
    &&\qquad \text{Lemma \ref{Lemma:g_properties}} \\
&=\frac{\lambda}{2}-\lambda'(w-\frac{s}{2})
    &&\\
&\ge\frac{q}{2}- \frac{q}{x-s-1}(w-\frac{s}{2}) 
    &&\qquad\text{Lemma \ref{Lemma:lambda_ineqs}}\\
&\ge 0
    &&\qquad q>0,\quad x\ge 2w+1.
\end{alignedat}
\end{equation*}

\textbf{Case 2:} $s\ge w$. If $w=0$, then $P(x,u,0)=0$ so there is nothing to prove. Assume $w>0$. Now the minimum equals $\frac{\ell}{s}$ on the whole interval $[0,w]$. Using \eqref{eq:integral_r} we get
\[
P(x,u,w)=\frac{p^w}{s}\int_0^w \lambda p^{-\ell} \ell\diff \ell= \frac{w}{s}g(\lambda w). 
\]
Taking the log derivative we have
\begin{equation*}
\begin{alignedat}{2}
\frac{\diff}{\diff s}\log P(x,u,w)
&=-\frac{1}{s}+ \frac{g'(\lambda w)}{g(\lambda w)} \lambda' w
    && \\
& \le -\frac{1}{s} + \frac{\lambda'}{\lambda} 
    &&\qquad \text{ Lemma \ref{Lemma:g_properties}}\\
& \le -\frac{1}{s} + \frac{1}{x-s-1} 
    &&\qquad \text{ Lemma \ref{Lemma:lambda_ineqs}}\\
& \le 0 
    &&\qquad s\le \frac{x-1}{2}.
\end{alignedat}
\end{equation*}
Since $P(x,u,w)$ is continuous in $s=w$, this concludes the proof. 
\end{proof}

\begin{lemma}\label{lemma:supP=G}
    Let $w\ge 0$ and $x\ge 2w+1$ and $x>1$. Then 
    \[
    \sup_{u\in [u_0,x)}P(x,u,w) = G_w(x).
    \]
\end{lemma}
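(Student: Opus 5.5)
By Lemma~\ref{Lemma:P_increasing}, $u\mapsto P(x,u,w)$ is increasing on $[u_0,x)$. Hence the supremum equals the left limit $\lim_{u\uparrow x}P(x,u,w)$, and the proof reduces to computing this limit and showing it equals $G_w(x)$.

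As $u\uparrow x$ we have $s=x-u\downarrow 0$. Eventually $s\le w$ when $w>0$, so we may use the closed form from Case~1 of the proof of Lemma~\ref{Lemma:P_increasing}:
\[
1-P(x,u,w)=e^{-\lambda(w-s)}\bigl(1-g(s\lambda)\bigr),\qquad \lambda=-\log\Bigl(1-\frac{1}{x-s}\Bigr).
\]
Since $x>1$, $\lambda$ depends continuously on $s$ up to $s=0$, where $\lambda\to\lambda_0:=-\log(1-1/x)\in(0,\infty)$. Moreover $g(y)=\int_0^1(1-e^{-ty})\diff t\to 0$ as $y\downarrow 0$, and $s\lambda\to 0$. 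Therefore
\[
1-P(x,u,w)\to e^{-\lambda_0 w}=\Bigl(1-\frac1x\Bigr)^w,
\]
so the limit of $P$ is $1-(1-1/x)^w=G_w(x)$.

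It remains to treat $w=0$. There $P(x,u,0)=0=G_0(x)$ for every $u$, so the identity is trivial. We should also check that $[u_0,x)$ is nonempty, which holds because $u_0=(x+1)/2<x$ whenever $x>1$.

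There is little real obstacle here. The only care needed is to justify using the Case~1 formula near $s=0$, which requires $s\le w$ for $w>0$; this holds for $u$ close enough to $x$. Alternatively, one can apply dominated convergence directly to the integral defining $P$: the integrand $\lambda p^{w-\ell}\min\{1,\ell/s\}$ is bounded by $\lambda p^{w-\ell}$, which is uniformly bounded for $s$ near $0$, and it converges pointwise to $\lambda_0p_0^{w-\ell}$ for $\ell>0$. This gives $\int_0^w\lambda_0p_0^{w-\ell}\diff\ell=1-p_0^w=G_w(x)$ and makes the argument robust.
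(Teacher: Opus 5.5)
Your proof is correct and matches the paper's: Lemma~\ref{Lemma:P_increasing} reduces the supremum to the left limit at $u=x$, and the paper evaluates that limit with exactly the dominated convergence argument you give as your alternative. Your primary route through the Case~1 closed form $1-P=e^{-\lambda(w-s)}(1-g(s\lambda))$ is an equally valid way to compute the same limit, and your separate treatment of $w=0$ is a harmless extra.
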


\begin{proof}
By Lemma \ref{Lemma:P_increasing} the supremum over $[u_0,x)$ is the limit as $u\to x$. 
Recall that
\[
P(x,u,w) = \int_0^w -\log\left(1-\frac{1}{u}\right) \left(1-\frac{1}{u}\right)^{w-\ell} \min\left\{1,\frac{\ell}{x-u}\right\}\diff \ell.
\]
Now as $u\to x$ we have $1-\frac{1}{u}\to 1-\frac{1}{x}$ and $\min\{1,\tfrac{\ell}{x-u}\}\to 1$ for all $\ell>0$. Thus dominated convergence gives 
\[
P(x,u,w)\xrightarrow{u\uparrow x} \int_0^w -\log\left(1-\frac{1}{x}\right) \left(1-\frac{1}{x}\right)^{w-\ell} \diff \ell = 1-\left(1-\frac{1}{x}\right)^w=G_w(x).
\]
\end{proof}

\subsection{Induction}

\begin{lemma}\label{lemma:induction_step}
    Let $w\ge 0$ and $0\le m\le \min\{1,w\}$ and denote the remaining budget $r=w-m$. Let $x>1$ as well as $z\in [u_0,x)$. Then 
    \begin{equation}\label{eq:induction_P}
    \frac{m}{z}\min\left\{1,\frac{r}{x-z}\right\} +\left(1-\frac{m}{z}\right) P(x,z,r)\le P(x,z,w).
    \end{equation}
\end{lemma}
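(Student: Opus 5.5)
The plan is to compare the two sides directly through the integral representation of $P$. I would split the integral defining $P(x,z,w)$ at $\ell=r$. The first piece reproduces $P(x,z,r)$ with a geometric discount. The second piece is bounded below by the Markov term at $\ell=r$. The resulting convex combination then dominates the left-hand side of \eqref{eq:induction_P}, using a Bernoulli-type inequality for $p^m$.

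Throughout, fix $z\in[u_0,x)$ and write $s:=x-z>0$, $q:=1/z$, $p:=1-q$, $\lambda:=-\log p$ and $f(\ell):=\min\{1,\ell/s\}$. Since $z\ge u_0=(x+1)/2>1$, we have $q\in(0,1)$, so $p\in(0,1)$ and $\lambda>0$.

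\textbf{Step 1: splitting the integral.} Write
\[
P(x,z,w)=\int_0^r \lambda p^{w-\ell}f(\ell)\diff\ell+\int_r^w \lambda p^{w-\ell}f(\ell)\diff\ell .
\]
Since $p^{w-\ell}=p^m p^{r-\ell}$, the first integral equals $p^m P(x,z,r)$. The function $f$ is nondecreasing, so on $[r,w]$ we have $f(\ell)\ge f(r)$. Together with $\int_r^w\lambda p^{w-\ell}\diff\ell=1-p^m$, this gives
\[
P(x,z,w)\ge p^m P(x,z,r)+(1-p^m)f(r).
\]
Because $f(r)=\min\{1,r/(x-z)\}$ is exactly the Markov term in \eqref{eq:induction_P}, it suffices to show
\[
\frac{m}{z}f(r)+\Bigl(1-\frac{m}{z}\Bigr)P(x,z,r)\le p^mP(x,z,r)+(1-p^m)f(r).
\]
Rearranged, this is equivalent to
\[
\Bigl(1-p^m-\frac{m}{z}\Bigr)\bigl(f(r)-P(x,z,r)\bigr)\ge0 .
\]

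\textbf{Step 2: the two factors are nonnegative.} For the second factor, monotonicity of $f$ gives $f(\ell)\le f(r)$ on $[0,r]$. Hence
\[
P(x,z,r)\le f(r)\int_0^r\lambda p^{r-\ell}\diff\ell=f(r)(1-p^r)\le f(r).
\]
For the first factor, we need $p^m\le 1-mq$ for $m\in[0,1]$. The map $m\mapsto(1-q)^m$ is convex, equals $1$ at $m=0$ and $1-q$ at $m=1$. It therefore lies below the chord $1-mq$ on $[0,1]$. This is precisely where the hypothesis $m\le1$ enters, and it gives $1-p^m\ge m/z$.

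\textbf{Main obstacle.} I expect the main subtlety to be the choice of how to lower-bound the tail piece $\int_r^w$. Replacing $f$ by its value at $r$ loses nothing essential, and it matches the success-branch value exactly. Once that choice is made, everything reduces to the elementary convexity bound in Step 2. The degenerate cases $m=0$ and $r=0$ are covered by the same computation, since both sides then coincide or the relevant integrals vanish.
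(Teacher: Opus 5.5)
Your proposal is correct and follows the paper's proof essentially step for step. You split $P(x,z,w)$ at $\ell=r$ to get $P(x,z,w)\ge p^mP(x,z,r)+(1-p^m)h(r)$, then reduce to the sign of $(1-p^m-mq)(h(r)-P(x,z,r))$, controlling the two factors by Bernoulli's inequality and by $P(x,z,r)\le h(r)(1-p^r)\le h(r)$.
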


\begin{proof}
Denote $q=\frac{1}{z}\in (0,1)$, $p=1-q$ and $\lambda=-\log p$. 
Using the Bernoulli inequality for an exponent $m\in [0,1]$ we have
\begin{equation}\label{eq:bernoulli_ineq}
p^m=(1-q)^m\le 1-mq.
\end{equation}
Note that $h(\ell)=\min\{1,\tfrac{\ell}{x-z}\}$ is increasing in $\ell$. 
Thus after direct integration
\begin{equation}\label{eq:PR_le_fR}
    P(x,z,r)\le h(r) \int_0^r\lambda p^{r-\ell}\diff \ell= h(r)(1-p^r)\le h(r).
\end{equation}

Further we bound $P(x,z,w)$ by splitting the integration interval and noting that the first term is $p^{w-r}P(x,z,r)=p^mP(x,z,r)$ and for the second term we bound via $h$ increasing, that is 
    \[
P(x,z,w) = \int_0^r \lambda p^{w-\ell}h(\ell)\diff \ell +\int_r^w \lambda p^{w-\ell}h(\ell)\diff \ell \ge  p^m P(x,z,r)+(1-p^m)h(r). 
\]
The term in the second bracket of the following equation is the left term in \eqref{eq:induction_P}. The inequality 
\[
[p^mP(x,z,r)+(1-p^m)h(r)]-[(1-mq)P(x,z,r)+mqh(r)]= (1-p^m-mq)(h(r)-P(x,z,r))\ge 0
\]
follows from \eqref{eq:bernoulli_ineq} and \eqref{eq:PR_le_fR}.
Combining the last two inequalities thus completes the proof.
\end{proof}

\begin{lemma}\label{lemma:induction}
    Let $n\in \N_0$ and $w\ge 0$, as well as $x>1$ and $x\ge 2w+1$ and $u\in [u_0,x)$. Then 
    \[
    V_n(x,u,w)\le P(x,u,w)
    \]
\end{lemma}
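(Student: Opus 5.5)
We argue by induction on $n$, using the Bellman recursion of Lemma~\ref{Lemma:dynamic_programming}. For $n=0$ we have $V_0(x,u,w)=\1{x\le 0}=0\le P$, since $x>1$ and $P\ge 0$. Assume the claim holds for $n-1$, for all admissible $(x,u,w)$. Fix $u\in[u_0,x)$, a jump size $0<z\le u$, and a mean $0\le m\le\min\{1,w,z\}$, and set $r=w-m$. We must bound
\[
\frac{m}{z}V_{n-1}(x-z,z,r)+\left(1-\frac{m}{z}\right)V_{n-1}(x,z,r)
\]
by $P(x,u,w)$. Note that $x\ge 2w+1\ge 2r+1$, so the hypotheses on $(x,r)$ are preserved in the failure branch.

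\textbf{Success branch.} We bound this branch by Markov's inequality. Any process in $M(z,r)$ has expected sum at most $r$ and is nonnegative. Hence $V_{n-1}(x-z,z,r)\le\min\{1,r/(x-z)\}$, where $x-z>0$ because $z<x$.

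\textbf{Failure branch, large jumps $z\in[u_0,u]$.} The induction hypothesis gives $V_{n-1}(x,z,r)\le P(x,z,r)$. Lemma~\ref{lemma:induction_step} then bounds the whole expression by $P(x,z,w)$. Lemma~\ref{Lemma:P_increasing} gives $P(x,z,w)\le P(x,u,w)$, since $z\le u$.

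\textbf{Failure branch, small jumps $z<u_0$.} Here we avoid the recursion altogether. The whole process after the choice of $Z_1$ lies in $M(u_0,w)$, so the bracketed expression is at most $V_n(x,u_0,w)$. Strictly, the split-recursion value is at most the value of some process in $M(z,w)\subset M(u_0,w)$, which follows from the converse construction in Lemma~\ref{Lemma:dynamic_programming}. Proposition~\ref{Prop:concentration_ineq_binomial} bounds this by $Q_{n,w}(x)$; its hypothesis $x\ge 2w-1$ holds. Lemma~\ref{Lemma:P>Q} gives $Q_{n,w}(x)\le P(x,u_0,w)$, and Lemma~\ref{Lemma:P_increasing} gives $P(x,u_0,w)\le P(x,u,w)$.

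Cleanly, then, we split directly on $V_n$. For each process in $M(u,w)$, either the first jump bound $U_1<u_0$, in which case the whole process lies in $M(u_0,w)$ and the concentration argument applies, or $U_1=z\in[u_0,u]$, in which case the recursion applies. Taking the supremum over both cases yields $V_n(x,u,w)\le P(x,u,w)$.

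The main obstacle is bookkeeping rather than analysis. One must check that the Markov bound uses the correct residual threshold $x-z$, and not $x-u$. Lemma~\ref{lemma:induction_step} is stated exactly with $z$, so this case matches only after first moving from $u$ down to $z$, and the monotonicity of Lemma~\ref{Lemma:P_increasing} is what allows this. One must also check that when the remaining budget is $r$, the induction hypothesis is applied with $u_0$ unchanged. This holds because $u_0=(x+1)/2$ depends only on $x$, which is the same in the failure branch.
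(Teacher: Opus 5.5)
Your proposal is correct and follows essentially the same route as the paper's proof. Both argue by induction through the Bellman recursion and split the jump size at $u_0$. Small jumps are handled by $V_n(x,u_0,w)\le Q_{n,w}(x)\le P(x,u_0,w)\le P(x,u,w)$. Large jumps use Markov's inequality on the success branch, the induction hypothesis on the failure branch, then Lemma~\ref{lemma:induction_step} and the monotonicity of Lemma~\ref{Lemma:P_increasing}. You also check explicitly that $x\ge 2r+1$, so the induction hypothesis applies at $(x,z,r)$, which the paper leaves implicit.
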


\begin{proof}
    For $n=0$ we have $V_0(x,u,w)= \1{x\le0}=0$. Now for sake of induction assume the lemma holds for $n-1$. We have
    \[
    \begin{split}    
    &V_n(x,u,w)= \sup_{\substack{0<z\le u\\0\le m\le \min\{1,w,z\}}} \frac{m}{z}V_{n-1}(x-z, z, w-m)+\left(1-\frac{m}{z}\right)V_{n-1}(x,z,w-m)\\
    &= \max\Bigg\{V_n\left(x,u_0,w\right),\sup_{\substack{u_0<z\le u\\0\le m\le \min\{1,w,z\}}} \frac{m}{z}V_{n-1}(x-z, z, w-m)+\left(1-\frac{m}{z}\right)V_{n-1}(x,z,w-m)\Bigg\}.
    \end{split}
    \]
    By Proposition \ref{Prop:concentration_ineq_binomial} the first term is bounded by $Q_{n,w}(x)\le P(x,u,w)$ by Lemma \ref{Lemma:P>Q} and Lemma \ref{Lemma:P_increasing}.
    For $z\in (u_0,u]$ we get
    \[
    \begin{split}
&\frac{m}{z}V_{n-1}(x-z,z,w-m)+\left(1-\frac{m}{z}\right)V_{n-1}(x,z,w-m)\\
&\le \frac{m}{z}\min\left\{1,\frac{w-m}{x-z}\right\}+\left(1-\frac{m}{z}\right)P(x,z,w-m)\le P(x,z,w)\le P(x,u,w),
\end{split}
\]
where the first inequality uses the Markov inequality and induction hypothesis, the second inequality uses \eqref{eq:induction_P} and the last inequality uses Lemma \ref{Lemma:P_increasing}. This concludes the proof. 
\end{proof}

\begin{proof}[Proof of Proposition \ref{Prop:target_bound}]
Denote 
\[
K = \sup_{0\le m\le \min\{1,w\}} \frac{m}{x} + \left(1-\frac{m}{x}\right)V_{n-1}(x,x,w-m).
\]
Since $V_n(x,u,w)$ is clearly increasing in $u$, by Lemma \ref{lemma:induction} we have
\[
V_n(x,x,w)\le \max\{K,\sup_{u\in [u_0,x)}P(x,u,w)\}.
\]
Denoting $p=1-\tfrac{1}{x}\in (0,1)$ we have, using Bernoulli's inequality for the exponent $m\in [0,1]$,
\[
\frac{m}{x}+\left(1-\frac{m}{x}\right)G_{w-m}(x)= 1-\left(1-\frac{m}{x}\right)p^{w-m} \le 1-p^mp^{w-m} =G_w(x).
\]
We proceed by induction, the case $n=0$ is immediate.
By induction hypothesis we have $V_{n-1}(x,x,w-m)\le G_{w-m}(x)$ and thus 
$K \le G_w(x)$. Lemma \ref{lemma:supP=G} gives the inequality.

It remains to show $V_n(x,\infty,w)\le V_n(x,x,w)$. To this end for $Z\in M(\infty,w)$, consider $\Tilde{Z}_i= \min\{Z_i,x\}$ and note that $(\Tilde{Z}_i)\in M(x,w)$, and since the random variables are non-negative, $\{\sum_{i=1}^n \Tilde{Z}_i \ge x\}=\{\sum_{i=1}^n Z_i \ge x\}$. This concludes the argument.
\end{proof}

\bibliographystyle{amsalpha}
\bibliography{bibliography.bib}
\end{document}